\newtheorem{theorem}{Theorem}[section]
\newtheorem{prop}[theorem]{Proposition}
\newtheorem{cor}[theorem]{Corollary}
\newtheorem{lemma}[theorem]{Lemma}
\theoremstyle{remark}
\newtheorem{remark}[theorem]{Remark}
\newtheorem{example}[theorem]{Example}
\DeclareMathOperator{\Li}{\mathrm{Li}}
\DeclareMathOperator{\LL}{\mathcal{L}}
\DeclareSymbolFont{cyss}{OT2}{wncyss}{m}{n}
\DeclareMathSymbol{\sh}{\mathbin}{cyss}{`x}
\numberwithin{equation}{section}
\begin{document}

\title[Two formulas for certain double and multiple polylogarithms]{Two formulas for certain double and multiple polylogarithms in two variables}

\author[M.~Kaneko]{Masanobu Kaneko}
\address{Faculty of Mathematics\\ Kyushu University\\ Motooka 744, Nishi-ku \\ Fukuoka 819-0395\\ Japan}
\email{kaneko.masanobu.661@m.kyushu-u.ac.jp}
\author[H.~Tsumura]{Hirofumi Tsumura}
\address{Department of Mathematical Sciences\\ Tokyo Metropolitan University\\ 1-1, Minami-Ohsawa\\ Hachioji, Tokyo 192-0397\\ Japan}
\email{tsumura@tmu.ac.jp }

\date{}

\maketitle

%%%%%%%%%%%%%%%%%%%%%%%%%%%%%%%%%
\begin{abstract}
We give a weighted sum formula for the double polylogarithm in two variables, from which we can recover the classical
weighted sum formulas for double zeta values, double $T$-values, and some double $L$-values. 
Also presented is a connection-type formula for a two-variable multiple polylogarithm, which specializes to previously known
single-variable formulas. This identity can also be regarded as a generalization
of the renowned five-term relation for the dilogarithm.
\end{abstract}

\baselineskip 18pt
%%%%%%%%%%%%%%%%%%%%%%%%%%%%%%%%%%%%%%%%%%%%                                   
\section{Introduction} \label{sec-1}
%%%%%%%%%%%%%%%%%%%%%%%%%%%%%%%%%%%%%%%%%%%%  

For positive integers $k_1,\ldots,k_r\  (k_r\ge2)$, the multiple zeta value (MZV) is defined by
$$ \zeta(k_1,k_2,\ldots,k_r)=  \sum_{0<m_1<\cdots <m_r}
\frac1{m_1^{k_1}m_2^{k_2}\cdots m_r^{k_r}}.$$
Among many generalizations of the MZV, the multiple $L$-value (of `shuffle-type') is defined for Dirichlet characters
$\chi_1,\ldots,\chi_r$ as
\begin{align}
&L_\sh(k_1,\ldots,k_r;\chi_1,\ldots,\chi_r) =\sum_{m_1,\ldots,m_r\geq 1}\frac{\chi_1(m_1)\cdots
\chi_r(m_r)}{m_1^{k_1}(m_1+m_2)^{k_2}\cdots (m_1+\cdots+m_r)^{k_r}}.  \label{Def-L-val}
\end{align}
Here, $k_r$ may equal to 1 if $\chi_r$ is a non-trivial character. See \cite{AK2004} for basic properties of $L_\sh$ and its 
companion $L$-value $L_*$ (`stuffle-type').  When all $\chi_i$ are trivial characters $\mathbbm{1}_2$ modulo 2, the $L$-value
$L_\sh(k_1,\ldots,k_r;\mathbbm{1}_2,\ldots,\mathbbm{1}_2)$ is (up to a normalizing factor $2^r$) nothing but the multiple $T$-value (MTV)
\begin{equation}\label{MTV}  
T(k_1,k_2,\ldots,k_r)=2^r\!\!\!\sum_{0<m_1<\cdots <m_r\atop m_i\equiv i\bmod 2}
\frac1{m_1^{k_1}m_2^{k_2}\cdots m_r^{k_r}},
\end{equation} 
which we studied in detail in \cite{KT2020-ASPM, KT2020-Tsukuba}. 

In this paper, we first highlight the so-called weighted sum formula for double zeta, $T$-, and $L$-values. 
The original weighted sum formula for double zeta values given in Ohno-Zudilin~\cite{OZ2008} is
\begin{align}
& \sum_{j=2}^{k-1}2^{j-1}\zeta(k-j,j)=\frac{k+1}{2}\zeta(k) \quad (k\geq 3). \label{WSF-OZ}
\end{align}
An analogous formula for double $T$-values is proved in \cite{KT2020-Tsukuba}:
\begin{align}
& \sum_{j=2}^{k-1}2^{j-1}T(k-j,j)=(k-1)T(k)\quad (k\geq 3). \label{WSF-KT}
\end{align}
Earlier, Nishi proved similar weighted sum formulas for double $L$-values with non-trivial Dirichlet characters
of conductors 3 and 4 (see Proposition~\ref{Prop-4} in Section~\ref{sec-3}).

Our first result of this paper is a `generic' weighted sum formula for a double polylogarithm in two variables,
from which all of the above formulas follow.

The multiple polylogarithm is defined by 
\begin{align}
& \Li_{k_1,\ldots,k_r}(z_1,\ldots, z_r)  =  \sum_{0<m_1<\cdots <m_r}
\frac{z_1^{m_1}\cdots z_r^{m_r}}{m_1^{k_1}\cdots m_r^{k_r}} \label{Def-PL}
\end{align}
for $k_1,\ldots, k_r\in \mathbb{Z}_{\ge1}$ and $z_1,\ldots, z_r\in \mathbb{C}$ with $|z_j|\le 1$ $(1\leq j\leq r)$ ($z_r\ne1$ if
$k_r=1$).

\begin{theorem}\label{Th-1-2}\ For an integer $k\in \mathbb{Z}_{\geq 2}$ and for complex numbers 
$x,y\in \mathbb{C}$ with $|x|\leq 1$, $|y|\leq 1$, $x\neq 1$, $y\neq 1$, (we moreover assume $xy\neq 1$
if $k=2$), we have
\begin{align}
& \sum_{j=1}^{k-1}2^{j-1}\left(\Li_{k-j,j}(x^{-1}y,x)+\Li_{k-j,j}(xy^{-1},y)\right) +\Li_{1,k-1}(x^{-1},xy)+\Li_{1,k-1}(y^{-1},xy)\notag\\
& \quad =(\Li_1(x)+\Li_1(y))\Li_{k-1}(xy)+(k-1)\Li_k(xy). \label{WSF-2}
\end{align}
\end{theorem}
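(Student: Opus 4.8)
The plan is to reduce the identity to a comparison of power‑series coefficients in $x$ and $y$. First I would rewrite each multiple polylogarithm so that its summand is manifestly bounded on the closed polydisc: for instance $\Li_{k-j,j}(x^{-1}y,x)=\sum_{0<m<n}x^{n-m}y^m/(m^{k-j}n^j)$, where the monomial $x^{n-m}y^m$ has both exponents positive because $0<m<n$, so the series converges absolutely for $|x|,|y|<1$; likewise $\Li_{1,k-1}(x^{-1},xy)=\sum_{0<m<n}x^{n-m}y^n/(m\,n^{k-1})$, and the right‑hand side is a product and sum of ordinary polylogarithms in such bounded monomials. Thus both sides are absolutely convergent double power series in $x,y$ on the open polydisc, and it suffices to match the coefficient of $x^py^q$ for every $p,q\ge1$; the excluded boundary points $x=1$, $y=1$ (and $xy=1$ when $k=2$) are exactly the singularities, so the boundary statement follows by continuity (Abel's theorem). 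Since the whole identity is invariant under $x\leftrightarrow y$ — this swaps the two double‑polylogarithm families and the two $\Li_{1,k-1}$ terms while fixing the right‑hand side — I may assume $p\ge q$.

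The computational heart is the inner weighted sum
\[
W(m,n):=\sum_{j=1}^{k-1}\frac{2^{j-1}}{m^{k-j}n^j},
\]
a finite geometric series in $j$ with ratio $2m/n$, which for $n\ne 2m$ telescopes to
\[
W(m,n)=\frac{1}{m^{k-1}(n-2m)}-\frac{2^{k-1}}{n^{k-1}(n-2m)}.
\]
Tracking which monomial each summand produces, the first family contributes $W(q,p+q)$ to the coefficient of $x^py^q$, the second contributes $W(p,p+q)$, and for $p>q$ the term $\Li_{1,k-1}(y^{-1},xy)$ contributes $1/\big((p-q)p^{k-1}\big)$ while $\Li_{1,k-1}(x^{-1},xy)$ contributes nothing in this range.

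It then remains to check the two cases. When $p=q$ the geometric ratio equals $1$ and the closed form does not apply, but a direct evaluation gives $W(q,2q)=(k-1)/(2q^k)$; summing the two equal families yields $(k-1)/q^k$, which is exactly the coefficient of $x^qy^q$ in $(k-1)\Li_k(xy)$, all other terms being absent. When $p>q$ I would substitute $n-2m=p-q$ into $W(q,p+q)$ and $n-2m=q-p$ into $W(p,p+q)$: the two $2^{k-1}(p+q)^{-(k-1)}$ pieces cancel, leaving $\frac{1}{p-q}\big(q^{-(k-1)}-p^{-(k-1)}\big)$. Adding the $\Li_{1,k-1}(y^{-1},xy)$ contribution $1/\big((p-q)p^{k-1}\big)$ removes the $-p^{-(k-1)}$ piece and produces $1/\big((p-q)q^{k-1}\big)$, precisely the coefficient of $x^py^q$ in $\Li_1(x)\Li_{k-1}(xy)$ (the other product and the $\Li_k$ term vanishing for $p>q$); the case $p<q$ then follows by the $x\leftrightarrow y$ symmetry. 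I expect the main obstacle to be not any single step but the combined bookkeeping — correctly pinning down, for each of the six terms, the range of $(p,q)$ on which it is nonzero — together with the clean cancellation of the $2^{k-1}$ pieces between the two double‑polylogarithm families, which is exactly what makes the weights $2^{j-1}$ conspire.
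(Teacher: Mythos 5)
Your proposal is correct, and it takes a genuinely different route from the paper. The paper proves the identity by a double-shuffle argument at the level of functions: it computes $\sum_{j=1}^{k-1}\Li_{k-j}(x)\Li_j(y)$ in two ways, first via the Gangl--Kaneko--Zagier partial fraction decomposition $\frac{1}{m^in^j}=\sum_{\mu}\bigl\{\binom{\mu-1}{i-1}\frac{1}{n^{i+j-\mu}(m+n)^{\mu}}+\binom{\mu-1}{j-1}\frac{1}{m^{i+j-\mu}(m+n)^{\mu}}\bigr\}$ together with the binomial identity $\sum_{j}\binom{\mu-1}{j-1}=2^{\mu-1}$ (this is the paper's Lemma 2.1, producing the weighted left-hand side), and second via the stuffle product $\Li_{k-j}(x)\Li_j(y)=\Li_{k-j,j}(x,y)+\Li_{j,k-j}(y,x)+\Li_k(xy)$ combined with two further special cases of the same partial fraction identity to convert $\sum_j\Li_{k-j,j}(x,y)$ and $\sum_j\Li_{j,k-j}(y,x)$ into products minus the $\Li_{1,k-1}$ correction terms. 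You instead verify the identity coefficient-by-coefficient: after rewriting each series so all monomials are $x^py^q$ with $p,q\ge 1$, you evaluate the weighted inner sum $W(m,n)=\sum_{j=1}^{k-1}2^{j-1}m^{-(k-j)}n^{-j}$ in closed form as a geometric series with ratio $2m/n$, and the three cases $p>q$, $p=q$, $p<q$ (the last by the $x\leftrightarrow y$ symmetry) reduce to elementary cancellations --- I checked your closed form for $W$, the diagonal value $W(q,2q)=(k-1)/(2q^k)$, and the cancellation of the $2^{k-1}(p+q)^{-(k-1)}$ pieces, and all are correct, as is the bookkeeping of which of the six terms contribute in each range of $(p,q)$. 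Your approach is more elementary and self-contained: it needs neither the GKZ decomposition nor the stuffle product, and it makes transparent why the weights $2^{j-1}$ are forced (they are exactly what makes the telescoped boundary terms at $n^{k-1}$ cancel between the two families). What it gives up is the structural insight: the paper's proof exhibits the formula as a genuine double-shuffle relation, which is what lets the authors reuse the intermediate identities (e.g.\ \eqref{Li-sh}, \eqref{2-1-2}) in Sections 2 and 3 to derive Corollary \ref{Cor1-2} and the specializations to $T$- and $L$-values. One small remark: your appeal to ``Abel's theorem'' for passing from the open polydisc to the boundary is stated for a two-variable power series, where the one-variable theorem does not literally apply; this deserves a sentence (e.g.\ fixing one variable at a time, or a summation-by-parts argument), though the paper itself invokes Abel's limit theorem at exactly the same level of detail, so this is not a gap relative to the published argument.
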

As a corollary, we obtain a one-variable version as follows.

\begin{cor}\label{Cor1-2}\ For $k\in \mathbb{Z}_{\geq 2}$ and $x\in \mathbb{C}$ with $|x|=1$ and $x\neq 1$, 
\begin{align}
& \sum_{j=1}^{k-1}2^{j-1}\left(\Li_{k-j,j}(x^{-2},x)+\Li_{k-j,j}(x^2,x^{-1})\right)-\Li_{k-1,1}(1,x)-\Li_{k-1,1}(1,x^{-1}) \notag\\
& \quad =\Li_k(x)+\Li_k(x^{-1})+(k-1)\zeta(k). \label{WSF-1}
\end{align}
\end{cor}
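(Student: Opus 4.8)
The plan is to obtain Corollary~\ref{Cor1-2} as a direct specialization of Theorem~\ref{Th-1-2} by setting $y=x^{-1}$. With $|x|=1$ we have $y=x^{-1}=\overline{x}$, so $|y|=1$, and the hypotheses $x\neq 1$, $y\neq 1$ are exactly $x\neq 1$. Note that $xy=x\cdot x^{-1}=1$, so the product variable degenerates to $1$; since we assume $k\geq 2$ but must avoid the excluded case $k=2$ with $xy=1$, I would first treat $k\geq 3$ by this substitution and then recover $k=2$ by a limiting/continuity argument (or by direct inspection, since all series involved converge on $|x|=1$, $x\neq 1$). Substituting $y=x^{-1}$ into the left-hand side of \eqref{WSF-2}, the arguments become $x^{-1}y=x^{-2}$, $xy^{-1}=x^2$, so the first two sums turn into exactly $\sum_{j=1}^{k-1}2^{j-1}\bigl(\Li_{k-j,j}(x^{-2},x)+\Li_{k-j,j}(x^2,x^{-1})\bigr)$, matching the first line of \eqref{WSF-1}.

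Next I would handle the two remaining terms on the left of \eqref{WSF-2}, namely $\Li_{1,k-1}(x^{-1},xy)+\Li_{1,k-1}(y^{-1},xy)$. Under $y=x^{-1}$ these become $\Li_{1,k-1}(x^{-1},1)+\Li_{1,k-1}(x,1)$. The key observation is the duality of index order: $\Li_{1,k-1}(z,1)=\sum_{0<m_1<m_2} z^{m_1}/(m_1 m_2^{k-1})$, and I expect this to be rewritten as $\Li_{k-1,1}(1,z)$ up to reindexing, since the definition \eqref{Def-PL} places the outer variable on the larger index. Concretely, $\Li_{k-1,1}(1,z)=\sum_{0<m_1<m_2} z^{m_2}/(m_1^{k-1} m_2)$, and I would need the reflection relating these two; the natural route is to recognize both as iterated-integral representations and use a variable flip, or to invoke the stuffle/shuffle symmetry already available from \cite{AK2004}. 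This sign and index bookkeeping—confirming that $\Li_{1,k-1}(x^{-1},1)+\Li_{1,k-1}(x,1)$ equals $-\bigl(\Li_{k-1,1}(1,x)+\Li_{k-1,1}(1,x^{-1})\bigr)$ plus correction terms—is the delicate point, and I expect the main obstacle to lie exactly here in matching conventions and extracting the correct single-variable $\Li_k$ corrections.

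For the right-hand side, substituting $y=x^{-1}$ gives $(\Li_1(x)+\Li_1(x^{-1}))\Li_{k-1}(1)+(k-1)\Li_k(1)$. Since $\Li_{k-1}(1)=\zeta(k-1)$ and $\Li_k(1)=\zeta(k)$ when $k\geq 3$ (so that these converge), the term $(k-1)\Li_k(1)=(k-1)\zeta(k)$ reproduces the last summand of \eqref{WSF-1} directly. The product term $(\Li_1(x)+\Li_1(x^{-1}))\zeta(k-1)$ must then combine with the boundary contributions isolated in the previous paragraph to yield the surviving $\Li_k(x)+\Li_k(x^{-1})$ on the right of \eqref{WSF-1}; here I would use a stuffle (harmonic-product) identity such as $\Li_1(x)\Li_{k-1}(xy)=\Li_{1,k-1}(\cdots)+\Li_{k-1,1}(\cdots)+\Li_k(\cdots)$ to reorganize the diverging-looking pieces into convergent ones, so that the apparently problematic factor $\zeta(k-1)$ cancels.

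Finally, for the excluded boundary case $k=2$ (where $xy=1$ is disallowed in Theorem~\ref{Th-1-2}), I would argue by taking the limit $y\to x^{-1}$ along a path with $|xy|<1$: each polylogarithm appearing in \eqref{WSF-1} converges absolutely for $|x|=1$, $x\neq 1$ (the weight-$2$ double polylogarithms and the single ones all converge at such boundary points), so dominated convergence legitimizes passing to $xy=1$, and the identity extends by continuity. This completes the derivation, with the understanding that the only real work—beyond routine substitution—is the careful index reversal in the $\Li_{1,k-1}\leftrightarrow\Li_{k-1,1}$ comparison and the accompanying stuffle reorganization on the right-hand side.
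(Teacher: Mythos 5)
Your treatment of the case $k\ge3$ is correct and is essentially the paper's own argument. The paper expands $(\Li_1(x)+\Li_1(y))\Li_{k-1}(xy)$ by the stuffle product, groups terms so that the differences $\Li_{1,k-1}(y,xy)-\Li_{1,k-1}(x^{-1},xy)$ and $\Li_{1,k-1}(x,xy)-\Li_{1,k-1}(y^{-1},xy)$ vanish identically at $y=x^{-1}$, and then substitutes $y=x^{-1}$; you substitute first and then invoke the same stuffle identity, which at $xy=1$ reads $\Li_1(x)\,\zeta(k-1)=\Li_{1,k-1}(x,1)+\Li_{k-1,1}(1,x)+\Li_k(x)$. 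This is the same computation in the opposite order, and it does make the $\zeta(k-1)$ terms cancel exactly as you predict. Your second paragraph's search for an ``index-reversal duality'' between $\Li_{1,k-1}(z,1)$ and $\Li_{k-1,1}(1,z)$ is a red herring: no such duality is needed, and the stuffle identity you name in your third paragraph is precisely what resolves the bookkeeping.

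The genuine gap is the case $k=2$, and your limiting argument fails as stated for two reasons. First, the claim that ``each polylogarithm appearing in \eqref{WSF-1} converges absolutely for $|x|=1$'' is false: for instance $\Li_{1,1}(1,x)=\sum_{0<m<n}x^n/(mn)$ has $\sum_{0<m<n}1/(mn)=\infty$ as its series of absolute values, so convergence on the circle is only conditional and dominated convergence is unavailable. Second, and more fundamentally, the limit $y\to x^{-1}$ has to be taken in the identity \eqref{WSF-2} that you are specializing, not in the target \eqref{WSF-1}, and when $k=2$ several individual terms of \eqref{WSF-2} diverge as $xy\to1$: on the right, $\Li_{k-1}(xy)=\Li_1(xy)=-\log(1-xy)\to\infty$, and on the left, $\Li_{1,1}(x^{-1},xy)$ and $\Li_{1,1}(y^{-1},xy)$ blow up like $-\log(1-xy)$ as well. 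So term-by-term passage to the limit is illegitimate; the identity survives only because these divergences cancel, and exhibiting that cancellation is real work. Even if you first perform the stuffle rearrangement for general $(x,y)$ and only then let $y\to x^{-1}$, you are left needing
\[ \lim_{y\to x^{-1}}\bigl(\Li_{1,1}(y,xy)-\Li_{1,1}(x^{-1},xy)\bigr)=0 \]
and its companion, which is an $\infty-\infty$ cancellation requiring an explicit estimate (of the same nature as the limit \eqref{limit} proved in Section~\ref{sec-3}), not mere continuity. The paper sidesteps all of this by proving the $k=2$ identity directly: it differentiates both sides of
\[ \Li_{1,1}(x^{-2},x)+\Li_{1,1}(x^2,x^{-1})-\Li_{1,1}(1,x)-\Li_{1,1}(1,x^{-1})=\Li_2(x)+\Li_2(x^{-1})+\zeta(2) \]
with respect to $x$ and checks that both sides vanish at $x=-1$. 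To complete your proof you must either supply such a direct verification or carry out the boundary estimates honestly; as written, the $k=2$ case is unproved.
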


We next consider the following multiple polylogarithm in two variables:
\begin{align}
\LL_{k_1,\ldots,k_r}(x,y)
&=\sum_{n_1,\ldots,n_r\geq 1}\frac{\prod_{j=1}^{r}~x^{n_j}(1-y^{n_j})}{\prod_{j=1}^{r}\left(\sum_{\nu=1}^{j}n_\nu\right)^{k_j}}\label{def-Chapo}\\
&=\sum_{0<m_1<\cdots<m_r} \frac{x^{m_r}(1-y^{m_2-m_1})(1-y^{m_3-m_2})\cdots (1-y^{m_r-m_{r-1}})}{m_1^{k_1}m_2^{k_2}\cdots m_r^{k_r}},\notag
\end{align}
where $k_1,\ldots,k_r\in \mathbb{Z}_{\geq 1}$ and $x,y\in \mathbb{C}$ with $|x|, |y|\leq 1$ ($x\ne1$ if $k_r=1$).

When $y=0$, this is the usual multiple polylogarithm~\eqref{Def-PL}, and when $y=-1$, this coincides with the level-$2$ multiple polylogarithm
\[ A(k_1,\ldots,k_r;x)=2^r\sum_{0<l_1<\cdots <l_r \atop l_j\equiv j \bmod 2}\frac{x^{l_r}}{l_1^{k_1}\cdots l_r^{k_r}}\]
studied in \cite[Section 4.1]{KT2020-Tsukuba} (see also \cite{Sasaki2012}). 

\begin{remark}
The series \eqref{def-Chapo} was essentially defined by Kamano \cite{Kamano2023} as a polylogarithm corresponding to 
Chapoton's `multiple $T$-value
with one parameter $c$', denoted $Z_c(k_1,\ldots,k_r)$. In  \cite{Chapo2022} Chapoton gave a multiple integral expression
of $Z_c(k_1,\ldots,k_r)$ and deduced its duality relation which generalizes the duality for multiple $T$-values.  
Kamano's multiple polylogarithm with one (fixed) parameter $c$ defined in \cite{Kamano2023} 
is, in our notation, equal to $\LL_{k_1,\ldots,k_r}(x,c)$, and Chapoton's $Z_c(k_1,\ldots,k_r)$ is $\LL_{k_1,\ldots,k_r}(1,c)$.
Kamano further defined and studied poly-Bernoulli numbers associated with $\LL_{k_1,\ldots,k_r}(x,c)$ 
and related zeta functions of so-called Arakawa-Kaneko type. 
\end{remark}

We prove the following. 

\begin{theorem}\label{Th-1-4}\  For integers $r\in \mathbb{Z}_{\geq 1}$, $k\in \mathbb{Z}_{\geq 2}$ and for complex numbers 
$x,y\in \mathbb{C}$ with $|x|\leq 1$, $|y|\leq 1$, $|(1-x)/(1-xy)|\leq 1$, $y\neq 1$, $xy\neq 1$, we have 

\begin{align} 
\LL_{\scriptsize{\underbrace{1,\ldots,1}_{r-1},k}}\left(\frac{1-x}{1-xy},y\right) &= (-1)^{k-1} \sum_{j_1+\cdots+j_k=r+k\atop
\forall j_i\ge1} \LL_{\scriptsize{\underbrace{1,\ldots,1}_{j_k-1}}}\left(\frac{1-x}{1-xy},y\right)\LL_{j_1,\ldots,j_{k-1}}\left(x,y\right)\notag\\
&\quad +\sum_{j=0}^{k-2} (-1)^j \LL_{\scriptsize{\underbrace{1,\ldots,1}_{r-1},k-j}}(1,y)\LL_{\scriptsize{\underbrace{1,\ldots,1}_{j}}}\left(x,y\right).\label{eq-1-4}
\end{align}
\end{theorem}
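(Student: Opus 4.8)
The plan is to pass to iterated integrals and exploit a Möbius involution interchanging the two singular points $0$ and $1$. Throughout write $\{1\}^{p}$ for a string of $p$ ones (with $\{1\}^{0}$ the empty index, so $\LL_{\{1\}^{0}}=1$), and introduce the two $1$-forms $\omega_0=\tfrac{dt}{t}$ and $\omega_y=\tfrac{dt}{1-t}-\tfrac{y\,dt}{1-ty}$. Differentiating the series \eqref{def-Chapo} termwise in its first variable $u$ gives $\tfrac{d}{du}\LL_{k_1,\dots,k_r}(u,y)=\tfrac1u\,\LL_{k_1,\dots,k_r-1}(u,y)$ when $k_r\ge2$, and $\tfrac{d}{du}\LL_{k_1,\dots,k_{r-1},1}(u,y)=\bigl(\tfrac1{1-u}-\tfrac{y}{1-uy}\bigr)\LL_{k_1,\dots,k_{r-1}}(u,y)$. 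Integrating from $0$, with iterated integrals normalised by $\tfrac{d}{du}\int_0^u\alpha_1\cdots\alpha_n=\alpha_n(u)\int_0^u\alpha_1\cdots\alpha_{n-1}$, these identities yield
\[
\LL_{k_1,\dots,k_r}(u,y)=\int_0^u \omega_y\,\omega_0^{\,k_1-1}\,\omega_y\,\omega_0^{\,k_2-1}\cdots\omega_y\,\omega_0^{\,k_r-1};
\]
in particular $\LL_{\{1\}^{r-1},k}(u,y)=\int_0^u\omega_y^{\,r}\,\omega_0^{\,k-1}$.

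Next I would study the involution $\phi(t)=\tfrac{1-t}{1-ty}$, which satisfies $\phi\circ\phi=\mathrm{id}$ and swaps $0\leftrightarrow1$. A direct computation shows that the substitution $t=\phi(s)$ obeys $\phi^{*}\omega_0=-\omega_y$ and $\phi^{*}\omega_y=-\omega_0$, so it reverses each word, interchanges the two letters, and attaches a sign $-1$ to each letter; the resulting $(-1)^{\mathrm{length}}$ is exactly cancelled by the sign from reversing the orientation of the path. Applying this to the representation above with $u=\phi(x)=\tfrac{1-x}{1-xy}$ (so that $0\to\phi(x)$ is carried to $1\to x$) gives the clean identity
\[
\LL_{\{1\}^{r-1},k}\!\left(\tfrac{1-x}{1-xy},y\right)=\int_x^1\omega_y^{\,k-1}\,\omega_0^{\,r}.
\]
The same computation applied at the fixed endpoints produces the duality $\LL_{\{1\}^{r-1},k}(1,y)=\int_0^1\omega_y^{\,r}\omega_0^{\,k-1}=\int_0^1\omega_y^{\,k-1}\omega_0^{\,r}$, which will supply the $j=0$ term of the second sum in \eqref{eq-1-4}.

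Third, I would apply Chen's path-composition (deconcatenation) formula to $\int_0^1\omega_y^{\,k-1}\omega_0^{\,r}$, splitting the interval at the interior point $x$ (taking $0<x<1$, with the general case following by analytic continuation). The point of splitting the \emph{convergent} integral $\int_0^1$ at an interior point, rather than splitting $\int_x^1$ at $0$, is that every resulting sub-integral is convergent, so no shuffle regularisation is needed. Each prefix $\int_0^x(\,\cdot\,)$ is a multiple polylogarithm in $x$, while by the identity $\LL_{\{1\}^{r-1},k}(\tfrac{1-x}{1-xy},y)=\int_x^1\omega_y^{k-1}\omega_0^{r}$ each suffix $\int_x^1(\,\cdot\,)$ is either an all-ones value $\LL_{\{1\}^{p}}\!\left(\tfrac{1-x}{1-xy},y\right)$ or a strictly lower instance $\LL_{\{1\}^{r-1},k-i}\!\left(\tfrac{1-x}{1-xy},y\right)$ of the left-hand side. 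Isolating the term with empty prefix and transposing yields a recursion expressing $\LL_{\{1\}^{r-1},k}\!\left(\tfrac{1-x}{1-xy},y\right)$ through $\LL_{\{1\}^{r-1},k}(1,y)$, through products whose only $\phi(x)$-factors are all-ones, and through strictly lower terms $\LL_{\{1\}^{r-1},k-i}\!\left(\tfrac{1-x}{1-xy},y\right)$ with $1\le i\le k-2$.

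Finally I would induct on $k$ (with $r$ fixed): substituting the already-established right-hand sides for the lower terms and collecting, the products reorganise into exactly the two sums of \eqref{eq-1-4}, with the overall factor $(-1)^{k-1}$ and the alternating signs $(-1)^{j}$ emerging through the iteration of the recursion. I expect this last bookkeeping to be the main obstacle: one must verify that the repeated substitution regenerates the general depth-$(k-1)$ indices $(j_1,\dots,j_{k-1})$ of the first sum with the correct coefficients, and that the values at $1$ collapse precisely to $\sum_{j=0}^{k-2}(-1)^{j}\LL_{\{1\}^{r-1},k-j}(1,y)\,\LL_{\{1\}^{j}}(x,y)$. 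The smallest case $r=1$, $k=2$ already reduces to the classical five-term relation for the dilogarithm, which provides a reliable check on all the signs.
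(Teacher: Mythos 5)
Your preparatory steps are correct, and they take a genuinely different route from the paper: the iterated-integral representation, the pullback computation $\phi^{*}\omega_0=-\omega_y$, $\phi^{*}\omega_y=-\omega_0$ for the involution $\phi(t)=(1-t)/(1-ty)$ (hence $\LL_{\{1\}^{r-1},k}\bigl(\tfrac{1-x}{1-xy},y\bigr)=\int_x^1\omega_y^{\,k-1}\omega_0^{\,r}$ and the endpoint duality $\LL_{\{1\}^{r-1},k}(1,y)=\int_0^1\omega_y^{\,k-1}\omega_0^{\,r}$), and the splitting of the convergent integral $\int_0^1\omega_y^{\,k-1}\omega_0^{\,r}$ at the interior point $x$ all check out; the resulting recursion does reproduce \eqref{k=2} verbatim when $k=2$. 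The paper never makes the involution explicit --- it appears only through the chain rule when differentiating $\LL(\tfrac{1-x}{1-xy},y)$ --- so your formulation has real conceptual appeal.

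The gap is in the last step, and it is more than bookkeeping. Substituting the induction hypothesis for $\LL_{\{1\}^{r-1},k-i}\bigl(\tfrac{1-x}{1-xy},y\bigr)$ into your recursion produces terms of the form $\LL_{\{1\}^{i}}(x,y)\cdot\LL_{j_1,\ldots,j_{k-i-1}}(x,y)$, i.e.\ products of \emph{two} polylogarithms evaluated at $x$; no such products occur in \eqref{eq-1-4}, where every term carries exactly one factor at $x$. So the terms do not ``reorganise'' by rearrangement alone: you must linearize these products, and the only mechanism available is the shuffle product of iterated integrals, $\int_0^x u\cdot\int_0^x v=\int_0^x(u\,\sh\,v)$, which your proposal never invokes. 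Once shuffles are brought in, the inductive step is equivalent (after the elementary cancellation $\sum_{i+j=m,\,i\ge1}(-1)^{j}\binom{m}{i}=-(-1)^m$, which yields the second sum of \eqref{eq-1-4}) to the family of identities
\[
\sum_{d=1}^{k-1}(-1)^{d}\,\LL_{\{1\}^{k-1-d}}(x,y)\sum_{j_1+\cdots+j_d=w+d\atop \forall j_i\ge1}\LL_{j_1,\ldots,j_d}(x,y)\;=\;-\,\LL_{\{1\}^{k-2},\,w+1}(x,y)\qquad(w\ge0),
\]
a genuine combinatorial lemma in the shuffle algebra on the two letters $\omega_0,\omega_y$ that needs its own inductive proof (for $k=3$ it is exactly the expansion of $\omega_y\,\sh\,\omega_y\omega_0^{\,w}$). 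The identity is true, so your route can be completed, but this missing lemma carries roughly the same weight as the paper's entire inductive computation. It is also precisely what the paper's method is engineered to avoid: the paper likewise inducts on $k$, but differentiates in $x$ instead of integrating; since $\partial/\partial x$ touches only one factor of each product, the inductive step there matches terms one-by-one after index shifts and never creates products of two functions of $x$, so no shuffle identities are needed.
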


This provides a generic formula which specializes (when $y=0$) to an Euler-type connection formula for the usual multiple polylogarithm
$\Li_{\scriptsize{1,\ldots,1,k}}(1,\ldots,1,x)$ mentioned in~\cite[Remark 3.7]{Kaneko-Tsumura2018} and (when $y=-1$) to that given by Pallewatta \cite{Pallewatta}. Moreover, this identity in the case $(r,k)=(1,2)$ 
is equivalent to the well-known two-variable, five-term relation for the classical dilogarithm (see Remark~\ref{Rem-5-2}).

We prove Theorem~\ref{Th-1-2} and Corollary~\ref{Cor1-2} in Section~\ref{sec-2} and deduce several known weighted sum formulas in Section~\ref{sec-3}. In Section~\ref{sec-4}, we prove Theorem~\ref{Th-1-4}. 

%%%%%%%%%%%%%%%%%%%%%%%%%%%%%%%%%%%%%%%%%%%%                                   
\section{Proof of Theorem \ref{Th-1-2}} \label{sec-2}
%%%%%%%%%%%%%%%%%%%%%%%%%%%%%%%%%%%%%%%%%%%%  

We compute the sum $\sum_{j=1}^{k-1}\Li_{k-j}(x)\Li_{j}(y)$ in two different (`double-shuffle') ways.  

\begin{lemma}\label{Lemma-2-4}\ Let $k\in \mathbb{Z}_{\geq 2}$ and $x,y\in \mathbb{C}$ with 
$|x|\leq 1,\ |y|\leq 1$, $x\neq 1$ and $y\neq 1$. Then 
\begin{align}
\sum_{j=1}^{k-1}\Li_{k-j}(x)\Li_{j}(y)& =\sum_{\mu=1}^{k-1}~2^{\mu-1}\left(\Li_{k-\mu,\mu}(x^{-1}y,x)
+\Li_{k-\mu,\mu}(xy^{-1},y)\right). \label{2-4-1}
\end{align}
\end{lemma}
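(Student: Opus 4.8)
The plan is to realize the left-hand side through iterated integrals and to read off the right-hand side from the shuffle product of those integrals. First I would record the representations $\Li_n(z)=\int_0^1 \omega_z\,\omega_0^{\,n-1}$ and $\Li_{k_1,k_2}(z_1,z_2)=\int_0^1 \omega_{z_1z_2}\,\omega_0^{\,k_1-1}\,\omega_{z_2}\,\omega_0^{\,k_2-1}$, where $\omega_0(t)=dt/t$, $\omega_z(t)=z\,dt/(1-zt)$, and a word $\eta_1\cdots\eta_n$ abbreviates the iterated integral over $0<t_1<\cdots<t_n<1$. Both are verified directly by expanding $\omega_z$ as a geometric series and integrating termwise. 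The useful point is that, after substituting the arguments occurring in \eqref{2-4-1}, every integral involves only $\omega_x,\omega_y,\omega_0$ (for instance $\Li_{k-\mu,\mu}(xy^{-1},y)=\int_0^1\omega_x\,\omega_0^{\,k-\mu-1}\,\omega_y\,\omega_0^{\,\mu-1}$, since $z_1z_2=x$), so each expression converges for $|x|,|y|\le1$, $x,y\ne1$, even though a naive series with an argument such as $x^{-1}y$ of modulus $>1$ need not.

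Next I would write $\Li_{k-j}(x)\,\Li_j(y)=\int_0^1\bigl(\omega_x\,\omega_0^{\,k-j-1}\bigr)\sh\bigl(\omega_y\,\omega_0^{\,j-1}\bigr)$ via the shuffle product of iterated integrals and sum over $j=1,\ldots,k-1$. Each summand is a sum of words of length $k$ containing exactly one $\omega_x$, one $\omega_y$, and $k-2$ copies of $\omega_0$, in which $\omega_x$ precedes its $k-j-1$ companion zeros and $\omega_y$ precedes its $j-1$ companion zeros. The crucial observation is that summing over $j$ amounts to ranging over all two-colourings of the $\omega_0$'s into ``$x$-zeros'' and ``$y$-zeros'', so $\sum_{j}\bigl(\omega_x\,\omega_0^{\,k-j-1}\bigr)\sh\bigl(\omega_y\,\omega_0^{\,j-1}\bigr)$ is the sum over all such words $W$, each counted with multiplicity equal to the number of admissible colourings of its zeros.

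The heart of the argument — and the step I expect to demand the most care — is this multiplicity count. Since every shuffle word begins with $\omega_x$ or $\omega_y$, I split into two symmetric cases. If $\omega_x$ comes first, the word is $W=\omega_x\,\omega_0^{\,q-2}\,\omega_y\,\omega_0^{\,k-q}$; the $q-2$ zeros between $\omega_x$ and $\omega_y$ are forced to be $x$-zeros, while each of the $k-q$ trailing zeros may be coloured either way, giving multiplicity $2^{k-q}$. Translating $W$ back via the double-integral representation gives $W=\Li_{q-1,\,k-q+1}(xy^{-1},y)$, and setting $\mu=k-q+1$ converts the multiplicity into $2^{\mu-1}$, so this case contributes $\sum_{\mu=1}^{k-1}2^{\mu-1}\Li_{k-\mu,\mu}(xy^{-1},y)$. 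The case with $\omega_y$ first is identical after exchanging $x\leftrightarrow y$ and yields $\sum_{\mu=1}^{k-1}2^{\mu-1}\Li_{k-\mu,\mu}(x^{-1}y,x)$; adding the two reproduces \eqref{2-4-1}.

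Finally I would verify the analytic hypotheses that license the shuffle identity. The poles $t=1/x,\,1/y$ of $\omega_x,\omega_y$ lie off $[0,1]$ once $|x|,|y|\le1$ and $x,y\ne1$; the innermost letter of every word is a form of type $\omega_z$, never $\omega_0$, so there is no divergence at $t=0$; and the outermost letter is either $\omega_0$ or some $\omega_x,\omega_y$ with $x,y\ne1$, so there is none at $t=1$. Hence all iterated integrals converge and the standard shuffle product formula applies. As a cross-check, the case $k=2$ collapses to $\Li_1(x)\Li_1(y)=\Li_{1,1}(xy^{-1},y)+\Li_{1,1}(x^{-1}y,x)$, which is exactly $\omega_x\sh\omega_y=\omega_x\omega_y+\omega_y\omega_x$. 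An alternative to the colouring count is an induction on $k$ using the recursive definition of the shuffle product, but the direct colouring argument seems the most transparent route to the explicit coefficient $2^{\mu-1}$.
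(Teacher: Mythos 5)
Your argument is essentially correct, but it follows a genuinely different route from the paper's. The paper never leaves the series side: it applies the partial fraction identity
\begin{equation*}
\frac{1}{m^in^j}=\sum_{\mu=1}^{i+j-1}\left\{\binom{\mu-1}{i-1}\frac{1}{n^{i+j-\mu}(m+n)^\mu}+\binom{\mu-1}{j-1}\frac{1}{m^{i+j-\mu}(m+n)^\mu}\right\}
\end{equation*}
to each product $\Li_{k-j}(x)\Li_j(y)$ for $|x|,|y|<1$, sums over $j$, and evaluates $\sum_{j=1}^{k-1}\binom{\mu-1}{k-j-1}=\sum_{j=1}^{k-1}\binom{\mu-1}{j-1}=2^{\mu-1}$, so the weight $2^{\mu-1}$ appears as a binomial sum; the boundary case $|x|=1$ or $|y|=1$ is then reached by one citation of Abel's limit theorem. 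You instead realize everything as iterated integrals and extract the same decomposition from the shuffle product, the weight $2^{\mu-1}$ arising as a count of admissible two-colourings of the $\omega_0$'s; that colouring count is precisely the word-combinatorial avatar of the paper's binomial identity. Your structural points are sound: the representation $\Li_{k_1,k_2}(z_1,z_2)=\int_0^1\omega_{z_1z_2}\,\omega_0^{k_1-1}\,\omega_{z_2}\,\omega_0^{k_2-1}$ is correct and rightly explains why arguments such as $x^{-1}y$ of modulus possibly greater than $1$ cause no trouble (only $\omega_x,\omega_y,\omega_0$ ever occur), the multiplicity $2^{k-q}$ is right, and the translation back to $\Li_{k-\mu,\mu}(xy^{-1},y)$ and $\Li_{k-\mu,\mu}(x^{-1}y,x)$ matches \eqref{2-4-1} exactly. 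What each approach buys: the paper's is entirely elementary (no integrals, no Fubini or shuffle machinery) and concentrates all analysis in the Abel step; yours is more conceptual, makes the origin of the coefficient $2^{\mu-1}$ transparent, and would adapt readily to similar weighted shuffle identities.

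The one soft spot is your treatment of the boundary $|x|=1$ or $|y|=1$. Verifying the integral representations ``by expanding $\omega_z$ as a geometric series and integrating termwise'' is immediate only for $|x|,|y|<1$, where all series converge absolutely; on the boundary, the terms whose defining series converge only conditionally (e.g.\ $\Li_1(x)$ and $\Li_{k-1,1}(xy^{-1},y)$ when $|x|=|y|=1$) require a limiting argument to identify the series with the convergent integral. You carefully check that the \emph{integrals} converge there, but that is not the same as showing they equal the \emph{series}. The standard patch is exactly the paper's: prove the identity for $(rx,ry)$ with $0<r<1$, where termwise integration is legitimate, and let $r\to1^-$, using Abel's limit theorem on the series side and dominated convergence on the integral side (the poles $1/(rx)$, $1/(ry)$ stay off $[0,1]$). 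With that sentence added, your proof is complete.
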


\begin{proof} \ First we assume $|x|<1$ and $|y|<1$. 
Recall the partial fraction decomposition
\begin{equation}
\frac{1}{m^in^j}=\sum_{\mu=1}^{i+j-1}\left\{ \binom{\mu-1}{i-1}\frac{1}{n^{i+j-\mu}(m+n)^\mu}
+\binom{\mu-1}{j-1}\frac{1}{m^{i+j-\mu}(m+n)^\mu}\right\}\ \, (i,j\ge1)\label{GKZ-2}
\end{equation}
(see for instance \cite[Equation (19)]{GKZ2006}). From this, we have
\begin{align}
\Li_{k-j}(x)\Li_{j}(y)&=\sum_{m,n\geq 1}\frac{x^my^n}{m^{k-j}n^j}  \notag\\
&=\sum_{m,n\geq 1}x^my^n \sum_{\mu=1}^{k-1}\bigg\{ \binom{\mu-1}{k-j-1}\frac{1}{n^{k-\mu}(m+n)^\mu}
+\binom{\mu-1}{j-1}\frac{1}{m^{k-\mu}(m+n)^\mu}\bigg\} \notag\\
&=\sum_{\mu=1}^{k-1}\bigg\{ \binom{\mu-1}{k-j-1}\Li_{k-\mu,\mu}(x^{-1}y,x)
+\binom{\mu-1}{j-1}\Li_{k-\mu,\mu}(xy^{-1},y)\bigg\}. \label{Li-sh}
\end{align}
Using this, we see that the left-hand side of \eqref{2-4-1} is
\begin{align*}
& \sum_{j=1}^{k-1}\sum_{\mu=1}^{k-1}\left\{\binom{\mu-1}{k-j-1}\Li_{k-\mu,\mu}(x^{-1}y,x)+\binom{\mu-1}{j-1}\Li_{k-\mu,\mu}(xy^{-1},y)\right\}\\
& \ \ =\sum_{\mu=1}^{k-1}\left(\Li_{k-\mu,\mu}(x^{-1}y,x)\sum_{j=1}^{k-1}\binom{\mu-1}{k-j-1}+\Li_{k-\mu,\mu}(xy^{-1},y)\sum_{j=1}^{k-1}\binom{\mu-1}{j-1}\right)\\
& \ \ =\sum_{\mu=1}^{k-1}~2^{\mu-1}\left(\Li_{k-\mu,\mu}(x^{-1}y,x)+\Li_{k-\mu,\mu}(xy^{-1},y)\right),
\end{align*}
where we have used the binomial identity $\sum_{j=1}^{k-1}\binom{\mu-1}{k-j-1}=\sum_{j=1}^{k-1}\binom{\mu-1}{j-1}=2^{\mu-1}$
valid when $k-2\geq \mu-1$. It follows from Abel's limit theorem (see \cite[Chap.\,2,\,Theorem\,3 and Remark]{Alf}) that 
\eqref{2-4-1} holds for $x,y\in \mathbb{C}$ with $|x|\leq 1,\ |y|\leq 1$, $x\neq 1$ and $y\neq 1$.
\end{proof}

On the other hand, we have (the stuffle product)
\begin{align}
\Li_{k-j}(x)\Li_j(y)&=\sum_{m,n\geq 1}\frac{x^my^n}{m^{k-j}n^j} =\left(\sum_{0<m<n}
+\sum_{0<n<m}+\sum_{0<m=n}\right)\frac{x^my^n}{m^{k-j}n^j}\\
&=\Li_{k-j,j}(x,y)+\Li_{j,k-j}(y,x)+\Li_{k}(xy). \label{2-2-1}
\end{align}
Now the identity \eqref{Li-sh} in the case of $(i,j)=(k-1,1)$ with $x$ being replaced by $xy$ gives
\begin{align}
\Li_{k-1}(xy)\Li_{1}(y)&=\Li_{1,k-1}(x^{-1},xy)+\sum_{j=1}^{k-1}\Li_{k-j,j}(x,y) \label{2-1-2}
\end{align}
and, exchanging $x$ and $y$ and replacing $j$ by $k-j$, 
\begin{align}
\Li_{k-1}(xy)\Li_{1}(x)&=\sum_{j=1}^{k-1}\Li_{j,k-j}(y,x)+\Li_{1,k-1}(y^{-1},xy). \label{2-1-2-2}
\end{align}
Hence by \eqref{2-2-1}, \eqref{2-1-2}, and \eqref{2-1-2-2}, we have
\begin{align*}
&\sum_{j=1}^{k-1}\Li_{k-j}(x)\Li_{j}(y) =\sum_{j=1}^{k-1}\left(\Li_{k-j,j}(x,y)+\Li_{j,k-j}(y,x)\right)+(k-1)\Li_{k}(xy)\\
&=(\Li_1(x)+\Li_1(y))\Li_{k-1}(xy)-\Li_{1,k-1}(x^{-1},xy)-\Li_{1,k-1}(y^{-1},xy)+(k-1)\Li_{k}(xy).
\end{align*}
We therefore have proved Theorem~\ref{Th-1-2}.

To prove Corollary~\ref{Cor1-2}, first rewrite the term $(\Li_1(x)+\Li_1(y))\Li_{k-1}(xy)$ on the right-hand side of
\eqref{WSF-2} by using the stuffle product as
\begin{align*}
(\Li_1(x)+\Li_1(y))\Li_{k-1}(xy)&=\Li_{1,k-1}(x,xy)+\Li_{k-1,1}(xy,x)+\Li_k(x^2y)\\
&+\Li_{1,k-1}(y,xy)+\Li_{k-1,1}(xy,y)+\Li_k(xy^2),
\end{align*}
and then from Theorem~\ref{Th-1-2} we have
\begin{align*}
& \sum_{j=1}^{k-1}2^{j-1}\left(\Li_{k-j,j}(x^{-1}y,x)+\Li_{k-j,j}(xy^{-1},y)\right)\\
& \quad = -\Li_{1,k-1}(x^{-1},xy)-\Li_{1,k-1}(y^{-1},xy)+\Li_{1,k-1}(x,xy)+\Li_{1,k-1}(y,xy)\\
&\quad\quad  +\Li_{k-1,1}(xy,x)+\Li_{k-1,1}(xy,y)+\Li_k(x^2y)+\Li_k(xy^2)+(k-1)\Li_k(xy)\\
& \quad =\left(\Li_{1,k-1}(y,xy)-\Li_{1,k-1}(x^{-1},xy)\right)+\left(\Li_{1,k-1}(x,xy)-\Li_{1,k-1}(y^{-1},xy)\right) \\
&\quad\quad +\Li_{k-1,1}(xy,x)+\Li_{k-1,1}(xy,y)+\Li_k(x^2y)+\Li_k(xy^2)+(k-1)\Li_k(xy). 
\end{align*}
When $k\ge3$, we may set $y=x^{-1}$ to obtain the corollary. When $k=2$, the identity to be proved is
\[ \Li_{1,1}(x^{-2},x)+\Li_{1,1}(x^2,x^{-1})-\Li_{1,1}(1,x)-\Li_{1,1}(1,x^{-1})=\Li_2(x)+\Li_2(x^{-1})+\zeta(2). \]
This can be directly checked by differentiating with respect to $x$ and noting that both sides are zero
when $x=-1$. 

%%%%%%%%%%%%%%%%%%%%%%%%%%%%%%%%%%%%
\section{Various weighted sum formulas}\label{sec-3}
%%%%%%%%%%%%%%%%%%%%%%%%%%%%%%%%%%%%

We first deduce equation~\eqref{WSF-OZ} from Corollary~\ref{Cor1-2} by letting $x\to1$. For this, 
we need to show the limit
\[ \lim_{x\to1}\left(\Li_{k-1,1}(x^{-2},x)+\Li_{k-1,1}(x^2,x^{-1})-\Li_{k-1,1}(1,x)-\Li_{k-1,1}(1,x^{-1})\right) =0,\]
for which it is enough to show
\begin{equation} \lim_{x\to1}\left(\Li_{k-1,1}(x^{-2},x)-\Li_{k-1,1}(1,x)\right) =0.\label{limit} \end{equation}
Using the stuffle product, we have
\begin{align*}
&\Li_{k-1,1}(x^{-2},x)-\Li_{k-1,1}(1,x)\\
& \ \ =(\Li_{k-1}(x^{-2})-\Li_{k-1}(1))\Li_1(x)-\Li_{1,k-1}(x,x^{-2})-\Li_{k}(x^{-1}) +\Li_{1,k-1}(x,1)+\Li_{k}(x) \\
& \ \ =(\Li_{k-1}(x^{-2})-\Li_{k-1}(1))\Li_{1}(x)+(\Li_{1,k-1}(x,1)-\Li_{1,k-1}(x,x^{-2}))+(\Li_{k}(x)-\Li_{k}(x^{-1})).
\end{align*}
Suppose $k\ge4$. Then, since 
\begin{align*}
\left|\Li_{k-1}(x^{-2})-\Li_{k-1}(1)\right|&=\left|\sum_{m=1}^\infty \frac{(x^{-2m}-1)}{m^{k-1}}\right|\\
&=\left|x^{-2}-1\right|\left|\sum_{m=1}^\infty \frac{x^{-2(m-1)}+x^{-2(m-2)}+\cdots +x^{-2}+1}{m^{k-1}}\right|\\
&\le \left|x^{-2}-1\right|\sum_{m=1}^\infty \frac{\left|x^{-2(m-1)}+x^{-2(m-2)}+\cdots +x^{-2}+1\right|}{m^{k-1}}\\
&\le \left|x^{-2}-1\right| \zeta(k-2)\\
&=O(x-1)\quad (x\to1), 
\end{align*}
we have 
\[ \lim_{x\to1}(\Li_{k-1}(x^{-2})-\Li_{k-1}(1))\Li_{1}(x) =0,\]
and thus
\[ \lim_{x\to1}\left(\Li_{k-1,1}(x^{-2},x)-\Li_{k-1,1}(1,x)\right) =0.\]
If $k=3$, the well-known reflection relation for $\Li_2(z)$ (see equation~\eqref{reflection} in Section~\ref{sec-4}) gives
\begin{align*}
\Li_2(x^{-2})-\Li_2(1)&=-\Li_2(1-x^{-2})-\log(x^{-2})\,\log(1-x^{-2})\\
& =-(1-x^{-2})\sum_{m=1}^{\infty}\frac{(1-x^{-2})^{m-1}}{m^2}+2(\log x)\log(1-x^{-2}).
\end{align*}
Since $(\log x)\,\log^n(1-x) \to 0$ $(x\to 1)$ for any $n\geq 1$, we have
$$(\Li_{2}(x^{-2})-\Li_{2}(1))\Li_{1}(x)\to 0\quad (x\to 1).$$
Thus we obtain \eqref{limit} and complete the deduction of \eqref{WSF-OZ}.

To obtain the sum formula \eqref{WSF-KT} for $T$-values, we add \eqref{WSF-1} and
\eqref{WSF-1}$|_{x\to y}$, and then subtract \eqref{WSF-2} with $x$ being replaced by $x^{-1}$ 
and also its ${x\leftrightarrow y}$ version. After some rearrangement of terms, we have
\begin{align}
& \sum_{j=2}^{k-1}2^{j-1}\big\{\Li_{k-j,j}(x^2,x^{-1})+\Li_{k-j,j}(x^{-2},x)+\Li_{k-j,j}(y^2,y^{-1})+\Li_{k-j,j}(y^{-2},y) \notag\\
& \quad \ -\Li_{k-j,j}(xy,x^{-1})-\Li_{k-j,j}(x^{-1}y^{-1},x)-\Li_{k-j,j}(xy,y^{-1})-\Li_{k-j,j}(x^{-1}y^{-1},y)\big\} \notag\\
& \quad \ +\left(\Li_{1,k-1}(x^{-1},x^{-1}y)-\Li_{1,k-1}(x,x^{-1}y)\right)+\left(\Li_{1,k-1}(y,x^{-1}y)-\Li_{1,k-1}(y^{-1},x^{-1}y)\right) \notag\\
& \quad \ +\left(\Li_{1,k-1}(x,xy^{-1})-\Li_{1,k-1}(x^{-1},xy^{-1})\right)+\left(\Li_{1,k-1}(y^{-1},xy^{-1})-\Li_{1,k-1}(y,xy^{-1})\right) \notag\\
& \quad \ +\left(\Li_{k-1,1}(x^{-2},x)-\Li_{k-1,1}(1,x)\right)+\left(\Li_{k-1,1}(x^2,x^{-1})-\Li_{k-1,1}(1,x^{-1})\right)\notag\\
& \quad \ +\left(\Li_{k-1,1}(y^{-2},y)-\Li_{k-1,1}(1,y)\right)+\left(\Li_{k-1,1}(y^2,y^{-1})-\Li_{k-1,1}(1,y^{-1})\right) \notag\\
& \quad \ +\left(\Li_{k-1,1}(x^{-1}y,x^{-1})-\Li_{k-1,1}(xy,x^{-1})\right)+\left(\Li_{k-1,1}(x^{-1}y,y)-\Li_{k-1,1}(x^{-1}y^{-1},y)\right)\notag\\
& \quad \ +\left(\Li_{k-1,1}(xy^{-1},x)-\Li_{k-1,1}(x^{-1}y^{-1},x)\right)+\left(\Li_{k-1,1}(xy^{-1},y^{-1})-\Li_{k-1,1}(xy,y^{-1})\right)\notag\\
& \ \  =\Li_k(x)+\Li_k(x^{-1})-\Li_k(x^{-1}y^2)-\Li_k(xy^{-2})\notag\\
& \quad \ +\Li_k(y)+\Li_k(y^{-1}) -\Li_k(x^{-2}y)-\Li_k(x^2y^{-1})\notag \\
& \quad \ +(k-1)\left\{2\zeta(k)-\Li_{k}(x^{-1}y)-\Li_k(xy^{-1})\right\}. \label{WSF-3}
\end{align}
Now we let $(x,y)\to (1,-1)$. Noting that
\begin{align*}
T(k-j,j)&=\sum_{m,n=1}^\infty \frac{(1-(-1)^m)(1-(-1)^n)}{m^{k-j}(m+n)^j}\\
&=\Li_{k-j,j}(1,1)+\Li_{k-j,j}(1,-1)-\Li_{k-j,j}(-1,1)-\Li_{k-j,j}(-1,-1)\quad(j\ge2),\\
T(k)&=\zeta(k)-\Li_k(-1)\quad (k\ge2),
\end{align*}
and the limit  
\[ \lim_{x\to1}\left(\Li_{k-1,1}(x^{-2},x)-\Li_{k-1,1}(1,x)\right) =0\]
as shown before as well as 
\[ \lim_{x\to1}\left(\Li_{k-1,1}(xy^{-1},x)-\Li_{k-1,1}(x^{-1}y^{-1},x)\right)=0\]
which can be similarly proved (we omit it), we obtain \eqref{WSF-KT}.  

Now we proceed to deduce certain sum formulas of level 3 and 4.

Let $\chi_3$ and $\chi_4$ are non-trivial Dirichlet characters of conductor $3$ and $4$ respectively. 
Then the following formulas hold. The stuffle-type double $L$-value $L_*(k_1,k_2;\chi_3,\chi_3)$ is 
defined as
\[ L_*(k_1,k_2;\chi_3,\chi_3)=\sum_{0<m<n}\frac{\chi_3(m)\chi_3(n)}{m^{k_1}n^{k_2}}. \]

\begin{prop}\label{Prop-4}\ For any $k\in \mathbb{Z}_{\geq 2}$,
\begin{align}
& \sum_{j=1}^{k-1}2^{j-1}L_\sh (k-j,j;\chi_3,\chi_3)+L_\sh (k-1,1;\chi_3,\chi_3)\notag\\
&\qquad\qquad\qquad\qquad +L_*(1,k-1;\chi_3,\chi_3)+L_*(k-1,1;\chi_3,\chi_3)\notag \\
& \quad =\frac{k-3}{2}L(k;\chi_3^2)\ \left(=\frac{(k-3)(1-3^{-k})}{2}\zeta(k)\right), \label{WSF-f3}\\
& \sum_{j=1}^{k-1}2^{j-1} L_\sh (k-j,j;\chi_4,\chi_4)+L_\sh (k-1,1;\chi_4,\chi_4)\notag \\
& \quad =\frac{k-1}{2}L(k;\chi_4^2)\ \left(=\frac{(k-1)(1-2^{-k})}{2}\zeta(k)\right). \label{WSF-f4}
\end{align}
\end{prop}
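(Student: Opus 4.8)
The plan is to derive both formulas from the double-shuffle identity of Lemma~\ref{Lemma-2-4} (equivalently from Theorem~\ref{Th-1-2}) by applying a Gauss-sum averaging, exactly as \eqref{WSF-OZ} and \eqref{WSF-KT} were obtained by sending $(x,y)$ to roots of unity. Write $\omega=e^{2\pi i/3}$ and recall the finite Fourier expansions $\chi_3(n)=\frac{1}{i\sqrt3}(\omega^n-\omega^{2n})$ and $\chi_4(n)=\frac{1}{2i}(i^n-(-i)^n)$. For a power series $F(x,y)=\sum_{m,n}c_{m,n}x^my^n$ these define a bilinear averaging $\mathcal{A}$, namely $\mathcal{A}[F]=-\tfrac13(F(\omega,\omega)-F(\omega,\omega^2)-F(\omega^2,\omega)+F(\omega^2,\omega^2))$ in conductor $3$ (and the analogous combination over $\{i,-i\}$ in conductor $4$), with the property $\mathcal{A}[F]=\sum_{m,n}c_{m,n}\,\chi(m)\chi(n)$. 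First I would record the three effects of $\mathcal{A}$ that make everything fit: since $\Li_{k-j,j}(xy^{-1},y)=\sum_{m,n}x^my^n/(m^{k-j}(m+n)^j)$, we get $\mathcal{A}[\Li_{k-j,j}(xy^{-1},y)]=L_\sh(k-j,j;\chi,\chi)$ as in \eqref{Def-L-val}, and by the $m\leftrightarrow n$ symmetry of $\chi(m)\chi(n)$ and of $m+n$ also $\mathcal{A}[\Li_{k-j,j}(x^{-1}y,x)]=L_\sh(k-j,j;\chi,\chi)$; next $\mathcal{A}[\Li_{k-j}(x)\Li_j(y)]=L(k-j;\chi)L(j;\chi)$ because the two single sums decouple; and finally, evaluating on the diagonal $m=n$, $\mathcal{A}[\Li_k(xy)]=\sum_{f\nmid l}l^{-k}=L(k;\chi^2)$ since $\chi^2=\mathbbm{1}_f$.

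Then I would apply $\mathcal{A}$ to the identity \eqref{2-4-1} of Lemma~\ref{Lemma-2-4}. Its right-hand side becomes $2\sum_{j=1}^{k-1}2^{j-1}L_\sh(k-j,j;\chi,\chi)$ (the factor $2$ from the two equal contributions just noted), while its left-hand side becomes $\sum_{j=1}^{k-1}L(k-j;\chi)L(j;\chi)$. Expanding each product by the stuffle product $L(a;\chi)L(b;\chi)=L_*(a,b;\chi,\chi)+L_*(b,a;\chi,\chi)+L(a+b;\chi^2)$ and summing gives $2\sum_{j=1}^{k-1}L_*(k-j,j;\chi,\chi)+(k-1)L(k;\chi^2)$, so that
\[ \sum_{j=1}^{k-1}2^{j-1}L_\sh(k-j,j;\chi,\chi)=\sum_{j=1}^{k-1}L_*(k-j,j;\chi,\chi)+\frac{k-1}{2}\,L(k;\chi^2). \]
This clean, conductor-independent identity also drops out of Theorem~\ref{Th-1-2} after $\mathcal{A}$: the weighted sum gives the left-hand side, $(k-1)\Li_k(xy)$ gives $(k-1)L(k;\chi^2)$, and the three remaining terms $\Li_{1,k-1}(x^{-1},xy)$, $\Li_{1,k-1}(y^{-1},xy)$ and $(\Li_1(x)+\Li_1(y))\Li_{k-1}(xy)$ combine, via the elementary telescoping $\tfrac{1}{n-l}\bigl(l^{1-k}-n^{1-k}\bigr)=\sum_{i=0}^{k-2}l^{-(k-1-i)}n^{-(i+1)}$, into $2\sum_j L_*(k-j,j;\chi,\chi)$.

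It then remains to pass from this identity to the precise shape of \eqref{WSF-f3} and \eqref{WSF-f4}, i.e.\ to collapse $\sum_j L_*(k-j,j;\chi,\chi)$ into the stated boundary terms; this is where the two conductors diverge and where the real work lies. One would use the sum formula for double $L_*$-values (the character analogue of Euler's $\sum_j\zeta(k-j,j)=\zeta(k)$) together with the shuffle-versus-stuffle relation between $L_\sh(k-1,1)$ and $L_*(k-1,1),\,L_*(1,k-1)$. The asymmetry originates from the arithmetic fact that $\chi_4(2)=0$ whereas $\chi_3(2)=-1$: the diagonal doubling values $\mathcal{A}[\Li_k(x^2y)]=\mathcal{A}[\Li_k(xy^2)]=\chi(2)L(k;\chi^2)$ that enter the reduction of $\sum_jL_*$ vanish in conductor $4$ (leaving $\sum_j L_*(k-j,j;\chi_4,\chi_4)=-L_\sh(k-1,1;\chi_4,\chi_4)$, hence \eqref{WSF-f4} with coefficient $\tfrac{k-1}{2}$) but survive in conductor $3$, producing the extra $L_*(1,k-1)+L_*(k-1,1)$ and the shift to coefficient $\tfrac{k-3}{2}$ in \eqref{WSF-f3}. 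As a sanity check I would verify the reconciliation at $k=2$, where both reduce to $L_\sh(1,1;\chi)=L_*(1,1;\chi)+\tfrac12L(2;\chi^2)$, which is exactly the shuffle relation $L_\sh(1,1;\chi)=\tfrac12L(1;\chi)^2$ combined with the stuffle relation $L(1;\chi)^2=2L_*(1,1;\chi)+L(2;\chi^2)$.

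I expect the main obstacle to be precisely this last reconciliation step: expressing $\sum_j L_*$ through boundary terms is a genuine sum-formula statement (not a formal manipulation), and its conductor-dependence must be tracked carefully to land the coefficients $\tfrac{k-3}{2}$ and $\tfrac{k-1}{2}$. A secondary, routine difficulty is convergence at the boundary: the depth-one factors $L(1;\chi)$ and the weight-$(k-1,1)$ values are only conditionally convergent, so the passage to roots of unity should be justified by Abel's limit theorem, exactly as in Lemma~\ref{Lemma-2-4} and in the deduction of \eqref{WSF-OZ}; and the case $k=2$, where two of the evaluation points satisfy $xy=1$, should be treated separately by the direct argument used at the end of Section~\ref{sec-2}.
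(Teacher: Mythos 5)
Your opening move is correct and rigorous: applying the character averaging $\mathcal{A}$ to Lemma~\ref{Lemma-2-4} (equivalently, via your telescoping computation, to Theorem~\ref{Th-1-2}) does yield, for $\chi\in\{\chi_3,\chi_4\}$,
\begin{equation*}
\sum_{j=1}^{k-1}2^{j-1}L_\sh(k-j,j;\chi,\chi)=\sum_{j=1}^{k-1}L_*(k-j,j;\chi,\chi)+\frac{k-1}{2}\,L(k;\chi^2),
\end{equation*}
with Abel's theorem handling the boundary convergence. The genuine gap is everything after that. To reach \eqref{WSF-f4} and \eqref{WSF-f3} you still need
\begin{align*}
\sum_{j=1}^{k-1}L_*(k-j,j;\chi_4,\chi_4)&=-L_\sh(k-1,1;\chi_4,\chi_4),\\
\sum_{j=1}^{k-1}L_*(k-j,j;\chi_3,\chi_3)&=-L_\sh(k-1,1;\chi_3,\chi_3)-L_*(1,k-1;\chi_3,\chi_3)-L_*(k-1,1;\chi_3,\chi_3)-L(k;\chi_3^2),
\end{align*}
and, given your averaged identity, these two statements are \emph{equivalent} to the Proposition: you have reformulated the problem, not reduced it. The tools you invoke for this last step do not close it. A ``sum formula for double $L_*$-values'' of the required shape is not a quotable standard fact---note that both right-hand sides above contain shuffle-type values, so these are mixed double-shuffle identities, not character analogues of Euler's $\sum_j\zeta(k-j,j)=\zeta(k)$; moreover, the only relation obtainable by comparing the two expansions of $\sum_j L(k-j;\chi)L(j;\chi)$ is exactly the identity you already derived. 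The $\chi(2)$-heuristic is likewise never attached to a computation: if you try to realize it by averaging the stuffle-rearranged form of Theorem~\ref{Th-1-2} (where $\Li_k(x^2y)$ and $\Li_k(xy^2)$ appear), terms such as $\Li_{k-1,1}(xy,x)$ average to constrained sums like $\sum_{a>2b>0}\chi(a)\chi(b)/\bigl(b^{k-1}(a-b)\bigr)$, which are not $L$-values, and the computation does not close up.

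The paper obtains the missing conductor-specific information from relations that your averaging never uses, namely Corollary~\ref{Cor1-2}. Its proof specializes the identity \eqref{WSF-3}---built from Corollary~\ref{Cor1-2} at $x$ and at $y$ together with Theorem~\ref{Th-1-2} at $(x^{-1},y)$ and its $x\leftrightarrow y$ version---at the \emph{single} points $(x,y)=(i,-i)$ and $(x,y)=(\omega,\omega^{-1})$, rather than averaging one identity over four points. At $(i,-i)$ the four $\Li_{1,k-1}$ differences in \eqref{WSF-3} cancel in pairs and the eight $\Li_{k-1,1}$ differences assemble via \eqref{LbyLi} into $16\,L_\sh(k-1,1;\chi_4,\chi_4)$, giving \eqref{WSF-f4}. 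At $(\omega,\omega^{-1})$ those same groups do not cancel: using $\chi_3(m)\chi_3(n)=\mathbbm{1}[3\mid m-n]-\mathbbm{1}[3\mid m+n]$ one checks that the $\Li_{1,k-1}$ group equals $6\,L_*(1,k-1;\chi_3,\chi_3)$ and the $\Li_{k-1,1}$ group equals $12\,L_\sh(k-1,1;\chi_3,\chi_3)+6\,L_*(k-1,1;\chi_3,\chi_3)$, while four of the $\Li_k$ terms on the right degenerate to $\zeta(k)$ and shift the coefficient from $\frac{k-1}{2}$ to $\frac{k-3}{2}$. That is where the asymmetry between the two conductors actually comes from. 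To salvage your approach you would have to import this extra input (or prove the two displayed $L_*$-evaluations by independent means), and that is the real work of the proof, not a reconciliation step.
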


\begin{proof}
First we prove \eqref{WSF-f4}. By $\chi_4(m)=(i^m-(-i)^m)/2i$, we have 
\begin{equation}\label{LbyLi} 4L_\sh(p,q;\chi_4,\chi_4)=\Li_{p,q}(-1,i)+\Li_{p,q}(-1,-i)-\Li_{p,q}(1,i)-\Li_{p,q}(1,-i).\end{equation}
Set $(x,y)=(i,-i)$ in \eqref{WSF-3}. Then we obtain
\begin{align*}
&\sum_{j=2}^{k-1} 2^j\left(\Li_{k-j,j}(-1,i)+\Li_{k-j,j}(-1,-i)-\Li_{k-j,j}(1,i)-\Li_{k-j,j}(1,-i)\right)\\
&\qquad+4\left(\Li_{k-1,1}(-1,i)+\Li_{k-1,1}(-1,-i)-\Li_{k-1,1}(1,i)-\Li_{k-1,1}(1,-i)\right)\\
&=2(k-1)(\zeta(2)-\Li_k(-1)).
\end{align*}
Together with \eqref{LbyLi} and $\zeta(2)-\Li_k(-1)=2L(k;\chi_4^2)$, the identity \eqref{WSF-f4} follows.

As for \eqref{WSF-f3}, we use the relation
\begin{equation}\label{LbyLi3} 3L_\sh(p,q;\chi_3,\chi_3)=\Li_{p,q}(\omega,\omega)+\Li_{p,q}(\omega^{-1},\omega^{-1})
-\Li_{p,q}(1,\omega)-\Li_{p,q}(1,\omega^{-1}) \end{equation}
with $\omega=e^{2\pi i/3}$, which follows from $\chi_3(m)=(\omega^m-\omega^{-m})/\sqrt{3}i$.
Equation \eqref{WSF-f3} follows from \eqref{WSF-3} by setting $(x,y)=(\omega,\omega^{-1})$.
\end{proof}

\begin{remark}
1) The formula \eqref{WSF-f4} was first obtained by M.~Nishi in his master's thesis (in Japanese) submitted to Kyushu University in 2001
(see~\cite[Proposition 4.2]{AK2004}). We note that $L_\sh (k-j,j;\chi_4,\chi_4)$
in Proposition~\ref{Prop-4} is, up to a factor of power of 2, the multiple $\widetilde{T}$-values extensively studied later in \cite{KT2022}.

2) Nishi's  formula for $\chi_3$ is
slightly different from \eqref{WSF-f3} and reads
\begin{align}
& \sum_{j=1}^{k-1}(2^{j-1}+1)L_\sh (k-j,j;\chi_3,\chi_3)+L_\sh (1,k-1;\chi_3,\chi_3)+L_\sh (k-1,1;\chi_3,\chi_3)\notag\\
& \quad =\frac{k-1}{2}L(k;\chi_3^2). \label{Nishi3}
\end{align}
This comes from \eqref{WSF-f3} and a kind of ordinary sum formula
\begin{align}\label{Lsum}
& \sum_{j=1}^{k-1}L_\sh (k-j,j;\chi_3,\chi_3)+L_\sh (1,k-1;\chi_3,\chi_3)-L_*(1,k-1;\chi_3,\chi_3)-L_*(k-1,1;\chi_3,\chi_3)\notag \\
& \quad =L(k;\chi_3^2),
\end{align}
which can be proved by writing $L(1,\chi_3)L(k-1,\chi_3)$ in two ways using shuffle and stuffle products.
Of course we may deduce \eqref{WSF-f3} from Nishi's formula \eqref{Nishi3} and \eqref{Lsum}.
\end{remark}

%%%%%%%%%%%%%%%%%%%%%%%%%
\section{Proof of Theorem \ref{Th-1-4}}\label{sec-4}
%%%%%%%%%%%%%%%%%%%%%%%%%
 
We proceed by induction on $k$ to prove the identity (displayed again)
\begin{align} 
\LL_{\scriptsize{\underbrace{1,\ldots,1}_{r-1},k}}\left(\frac{1-x}{1-xy},y\right) &= (-1)^{k-1} \sum_{j_1+\cdots+j_k=r+k\atop
\forall j_i\ge1} \LL_{\scriptsize{\underbrace{1,\ldots,1}_{j_k-1}}}\left(\frac{1-x}{1-xy},y\right)\LL_{j_1,\ldots,j_{k-1}}\left(x,y\right)\notag\\
&\quad +\sum_{j=0}^{k-2} (-1)^j \LL_{\scriptsize{\underbrace{1,\ldots,1}_{r-1},k-j}}(1,y)\LL_{\scriptsize{\underbrace{1,\ldots,1}_{j}}}\left(x,y\right).\label{eq-1-4}
\end{align}

When $k=2$, the identity in question becomes
\begin{equation}\label{k=2}
\LL_{\scriptsize{\underbrace{1,\ldots,1}_{r-1},2}}\left(\frac{1-x}{1-xy},y\right) = -\sum_{j=0}^r \LL_{\scriptsize{\underbrace{1,\ldots,1}_{j}}}\left(\frac{1-x}{1-xy},y\right)\LL_{r+1-j}\left(x,y\right)+\LL_{\scriptsize{\underbrace{1,\ldots,1}_{r-1},2}}(1,y).
\end{equation}
We differentiate both sides with respect to $x$ and check the results are the same. Since both sides are $\LL_{\scriptsize{\underbrace{1,\ldots,1}_{r-1},2}}\left(1,y\right) $ when $x=0$, this confirms the identity.
We use the (easily proved) differential formulas
\[ \frac{\partial}{\partial x}\LL_{k_1,\ldots,k_r}(x,y)=
\begin{cases} 
\dfrac{1}{x}\LL_{k_1,\ldots,k_r-1}(x,y) & k_r>1,\\[2mm]
\dfrac{1-y}{(1-x)(1-xy)}\LL_{k_1,\ldots,k_{r-1}}(x,y) & k_r=1
\end{cases}
\]
and
\[ \frac{\partial}{\partial x}\ \frac{1-x}{1-xy}=-\frac{1-y}{(1-xy)^2}. \]
Using these, we have 
\begin{align*}  \frac{\partial}{\partial x}\LL_{\scriptsize{\underbrace{1,\ldots,1}_{r-1},2}}\left(\frac{1-x}{1-xy},y\right) 
&=\left(\frac{1-x}{1-xy}\right)^{-1}\LL_{\scriptsize{\underbrace{1,\ldots,1}_{r}}}\left(\frac{1-x}{1-xy},y\right) \times
\left(-\frac{1-y}{(1-xy)^2}\right)\\
&=-\frac{1-y}{(1-x)(1-xy)}\LL_{\scriptsize{\underbrace{1,\ldots,1}_{r}}}\left(\frac{1-x}{1-xy},y\right).
\end{align*}
On the other hand, since 
\begin{align*}  \frac{\partial}{\partial x}\LL_{\scriptsize{\underbrace{1,\ldots,1}_{j}}}\left(\frac{1-x}{1-xy},y\right) 
&=\dfrac{1-y}{(1-\frac{1-x}{1-xy})(1-\frac{1-x}{1-xy}y)}\LL_{\scriptsize{\underbrace{1,\ldots,1}_{j-1}}}\left(\frac{1-x}{1-xy},y\right) \times
\left(-\frac{1-y}{(1-xy)^2}\right)\\
&=-\frac1{x}\LL_{\scriptsize{\underbrace{1,\ldots,1}_{j-1}}}\left(\frac{1-x}{1-xy},y\right)
\end{align*}
for $j\ge1$, we have 
\begin{align*}
&\frac{\partial}{\partial x}\left(-\sum_{j=0}^r \LL_{\scriptsize{\underbrace{1,\ldots,1}_{j}}}\left(\frac{1-x}{1-xy},y\right)\LL_{r+1-j}\left(x,y\right)\right)
\\
&=\sum_{j=1}^r \frac1x\LL_{\scriptsize{\underbrace{1,\ldots,1}_{j-1}}}\left(\frac{1-x}{1-xy},y)\right)\LL_{r+1-j}\left(x,y\right)
-\sum_{j=0}^{r-1} \LL_{\scriptsize{\underbrace{1,\ldots,1}_{j}}}\left(\frac{1-x}{1-xy},y\right)\cdot\frac1x \LL_{r-j}\left(x,y\right)\\
&\quad-\LL_{\scriptsize{\underbrace{1,\ldots,1}_{r}}}\left(\frac{1-x}{1-xy},y\right)\cdot \frac{1-y}{(1-x)(1-xy)}\\
&=-\frac{1-y}{(1-x)(1-xy)}\LL_{\scriptsize{\underbrace{1,\ldots,1}_{r}}}\left(\frac{1-x}{1-xy},y\right),
\end{align*}
as expected.
For general $k\ge3$, we have, by using the induction hypothesis, 
\begin{align}&\frac{\partial}{\partial x}\left(\text{L.H.S of }\eqref{eq-1-4}\right)\notag\\
&=-\frac{1-y}{(1-x)(1-xy)}\LL_{\scriptsize{\underbrace{1,\ldots,1}_{r-1},k-1}}\left(\frac{1-x}{1-xy},y\right)\notag\\
&=-\frac{1-y}{(1-x)(1-xy)}\left((-1)^{k-2} \sum_{j_1+\cdots+j_{k-1}=r+k-1\atop\forall j_i\ge1} \right.
\LL_{\scriptsize{\underbrace{1,\ldots,1}_{j_{k-1}-1}}}\left(\frac{1-x}{1-xy},y\right)\LL_{j_1,\ldots,j_{k-2}}\left(x,y\right)\notag\\
&\left. \quad +\sum_{j=0}^{k-3} (-1)^j \LL_{\scriptsize{\underbrace{1,\ldots,1}_{r-1},k-1-j}}(1,y)\LL_{\scriptsize{\underbrace{1,\ldots,1}_{j}}}\left(x,y\right)\right)\label{LHS}
\end{align}
and
\begin{align} 
&\frac{\partial}{\partial x}\left(\text{R.H.S of }\eqref{eq-1-4}\right)\notag\\
&=\frac{\partial}{\partial x}\left((-1)^{k-1}\sum_{j_1+\cdots+j_k=r+k\atop j_k\ge2} 
\LL_{\scriptsize{\underbrace{1,\ldots,1}_{j_k-1}}}\left(\frac{1-x}{1-xy},y\right)\LL_{j_1,\ldots,j_{k-1}}\left(x,y\right)\right.\notag\\
&\left.\quad+(-1)^{k-1} \sum_{j_1+\cdots+j_{k-1}=r+k-1\atop\forall j_i\ge1} 
\LL_{j_1,\ldots,j_{k-1}}\left(x,y\right)+\sum_{j=0}^{k-2} (-1)^j \LL_{\scriptsize{\underbrace{1,\ldots,1}_{r-1},k-j}}(1,y)\LL_{\scriptsize{\underbrace{1,\ldots,1}_{j}}}\left(x,y\right)\right)\notag\\
&=(-1)^{k} \sum_{j_1+\cdots+j_k=r+k\atop j_k\ge2} 
\frac1x \LL_{\scriptsize{\underbrace{1,\ldots,1}_{j_k-2}}}\left(\frac{1-x}{1-xy},y\right)\LL_{j_1,\ldots,j_{k-1}}\left(x,y\right)\label{cancel1}\\
&\quad +(-1)^{k-1}\sum_{j_1+\cdots+j_k=r+k\atop j_k\ge2, j_{k-1}\ge2} 
\LL_{\scriptsize{\underbrace{1,\ldots,1}_{j_k-1}}}\left(\frac{1-x}{1-xy},y\right)\cdot \frac1x \LL_{j_1,\ldots,j_{k-1}-1}\left(x,y\right)\label{cancel2}\\
&\quad +(-1)^{k-1}\sum_{j_1+\cdots+j_k=r+k\atop j_k\ge2, j_{k-1}=1} 
\LL_{\scriptsize{\underbrace{1,\ldots,1}_{j_k-1}}}\left(\frac{1-x}{1-xy},y\right)\cdot \frac{1-y}{(1-x)(1-xy)} \LL_{j_1,\ldots,j_{k-2}}\left(x,y\right)\label{rem1}\\
&\quad+(-1)^{k-1} \sum_{j_1+\cdots+j_{k-1}=r+k-1\atop j_{k-1}\ge2} \frac1x\LL_{j_1,\ldots,j_{k-1}-1}\left(x,y\right)\label{cancel3}\\
&\quad+(-1)^{k-1} \sum_{j_1+\cdots+j_{k-2}=r+k-2\atop \forall j_i\ge1} \frac{1-y}{(1-x)(1-xy)}\LL_{j_1,\ldots,j_{k-2}}\left(x,y\right)\label{rem2}\\
&\quad+\sum_{j=1}^{k-2} (-1)^j \LL_{\scriptsize{\underbrace{1,\ldots,1}_{r-1},k-j}}(1,y)\cdot
\frac{1-y}{(1-x)(1-xy)}\LL_{\scriptsize{\underbrace{1,\ldots,1}_{j-1}}}\left(x,y\right). \label{rem3}
\end{align}
Noting that (by changing $j_k\to j_k+1$)  
\begin{align*} 
& \sum_{j_1+\cdots+j_k=r+k\atop j_k\ge2} 
\frac1x \LL_{\scriptsize{\underbrace{1,\ldots,1}_{j_k-2}}}\left(\frac{1-x}{1-xy},y\right)\LL_{j_1,\ldots,j_{k-1}}\left(x,y\right)\\
&=\sum_{j_1+\cdots+j_k=r+k-1\atop \forall j_i\ge1} 
\frac1x \LL_{\scriptsize{\underbrace{1,\ldots,1}_{j_k-1}}}\left(\frac{1-x}{1-xy},y\right)\LL_{j_1,\ldots,j_{k-1}}\left(x,y\right)
\end{align*}
and
\begin{align*} 
& \!\!\!\sum_{j_1+\cdots+j_k=r+k\atop j_k\ge2, j_{k-1}\ge2} 
\LL_{\scriptsize{\underbrace{1,\ldots,1}_{j_k-1}}}\left(\frac{1-x}{1-xy},y\right) \frac1x \LL_{j_1,\ldots,j_{k-1}-1}\left(x,y\right)
+\!\!\!  \sum_{j_1+\cdots+j_{k-1}=r+k-1\atop j_{k-1}\ge2} \frac1x\LL_{j_1,\ldots,j_{k-1}-1}\left(x,y\right)\\
&=\sum_{j_1+\cdots+j_k=r+k\atop j_{k-1}\ge2} 
\frac1x\LL_{\scriptsize{\underbrace{1,\ldots,1}_{j_k-1}}}\left(\frac{1-x}{1-xy},y\right)  \LL_{j_1,\ldots,j_{k-1}-1}\left(x,y\right)\\
&=\sum_{j_1+\cdots+j_k=r+k-1\atop \forall j_{i}\ge1} 
\frac1x\LL_{\scriptsize{\underbrace{1,\ldots,1}_{j_k-1}}}\left(\frac{1-x}{1-xy},y\right)  \LL_{j_1,\ldots,j_{k-1}}\left(x,y\right),
\end{align*}
we see that the terms \eqref{cancel1}, \eqref{cancel2}, and \eqref{cancel3} add up to 0.
Likewise, the sum of the terms \eqref{rem1}, \eqref{rem2}, and \eqref{rem3}, without the factor $\frac{1-y}{(1-x)(1-xy)}$,
is equal to 
\begin{align*}
&(-1)^{k-1}\sum_{j_1+\cdots+j_k=r+k\atop j_k\ge2, j_{k-1}=1} 
\LL_{\scriptsize{\underbrace{1,\ldots,1}_{j_k-1}}}\left(\frac{1-x}{1-xy},y\right)\LL_{j_1,\ldots,j_{k-2}}\left(x,y\right)\\
&+(-1)^{k-1} \sum_{j_1+\cdots+j_{k-2}=r+k-2\atop \forall j_i\ge1} \LL_{j_1,\ldots,j_{k-2}}\left(x,y\right)
+\sum_{j=1}^{k-2} (-1)^j \LL_{\scriptsize{\underbrace{1,\ldots,1}_{r-1},k-j}}(1,y)\cdot
\LL_{\scriptsize{\underbrace{1,\ldots,1}_{j-1}}}\left(x,y\right)\\
&=(-1)^{k-1}\sum_{j_1+\cdots+j_{k-2}+j_k=r+k-1\atop j_k\ge2} 
\LL_{\scriptsize{\underbrace{1,\ldots,1}_{j_k-1}}}\left(\frac{1-x}{1-xy},y\right)\LL_{j_1,\ldots,j_{k-2}}\left(x,y\right)\\
&+(-1)^{k-1} \sum_{j_1+\cdots+j_{k-2}=r+k-2\atop \forall j_i\ge1} \LL_{j_1,\ldots,j_{k-2}}\left(x,y\right)
+\sum_{j=1}^{k-2} (-1)^j \LL_{\scriptsize{\underbrace{1,\ldots,1}_{r-1},k-j}}(1,y)\cdot
\LL_{\scriptsize{\underbrace{1,\ldots,1}_{j-1}}}\left(x,y\right)\\
&=(-1)^{k-1}\sum_{j_1+\cdots+j_{k-2}+j_{k-1}=r+k-1\atop \forall j_i\ge1} 
\LL_{\scriptsize{\underbrace{1,\ldots,1}_{j_{k-1}-1}}}\left(\frac{1-x}{1-xy},y\right)\LL_{j_1,\ldots,j_{k-2}}\left(x,y\right)\\
&\quad +\sum_{j=0}^{k-3} (-1)^{j+1} \LL_{\scriptsize{\underbrace{1,\ldots,1}_{r-1},k-j-1}}(1,y)\cdot
\LL_{\scriptsize{\underbrace{1,\ldots,1}_{j}}}\left(x,y\right).
\end{align*}
This, multiplied by $\frac{1-y}{(1-x)(1-xy)}$, is equal to \eqref{LHS} and Theorem~\ref{Th-1-4} is proved.
\qed

\begin{example}\label{Rem-5-2}\ We may deduce the famous five-term relation of the dilogarithm function
from \eqref{eq-1-4} in the case $(r,k)=(1,2)$:
\[ \LL_2\left(\frac{1-x}{1-xy},y\right)=-\LL_1\left(\frac{1-x}{1-xy},y\right)\LL_1(x,y)-\LL_2(x,y)+\LL_2(1,y). \]
Namely, since $\LL_{2}(x,y)=\Li_2(x)-\Li_2(xy)$, this can be written as
\begin{align*}
 &\Li_2\left(\frac{1-x}{1-xy}\right)-\Li_2\left(\frac{(1-x)y}{1-xy}\right)+\Li_2(x)-\Li_2(xy) -\Li_2(1)+\Li_2(y)\\
  &=
 -\left(\Li_1\left(\frac{1-x}{1-xy}\right)-\Li_1\left(\frac{(1-x)y}{1-xy}\right)\right)\left(\Li_1(x)-\Li_1(xy)\right). 
\end{align*}
Using $\frac{(1-x)y}{1-xy}=1-\frac{1-y}{1-xy}$ and the reflection formula 
\begin{equation} \Li_2(1-z)=-\Li_2(z)+\Li_2(1)-\log z\log(1-z), \label{reflection} \end{equation}
we may rewrite this as in the form 
\begin{align*}
& \Li_2(x)+\Li_2(y)+\Li_2(1-xy)+\Li_2\left(\frac{1-x}{1-xy}\right)+\Li_2\left(\frac{1-y}{1-xy}\right) \\
&=3\zeta(2)-\log x\log(1-x)-\log y\log(1-y)-\log\left(\frac{1-x}{1-xy}\right)\log\left(\frac{1-y}{1-xy}\right),
\end{align*}
which is presented for instance in \cite[Section 2]{Zagier}\footnote{Note that the constant $\pi^2/6$ there
should be $\pi^2/2$ and the sign in front of $\log((1-x)/(1-xy))$ on the right should be minus.} 
\end{example}

\begin{example}\label{Exam-5-3}\ The case $(r,k)=(2,2)$ of \eqref{eq-1-4} gives
\begin{align*}
& \Li_{1,2}\left(1,\frac{1-x}{1-xy}\right)-\Li_{1,2}\left(y,\frac{1-x}{1-xy}\right) -\Li_{1,2}\left(y^{-1},\frac{(1-x)y}{1-xy}\right)
+\Li_{1,2}\left(1,\frac{(1-x)y}{1-xy}\right) \\
&=\Li_3(1)-\Li_3(x)-\Li_3\left(y \right)+\Li_3\left(xy\right)+(\log x)\left(\Li_2(x)-\Li_2(xy)\right)+\frac12 (\log x)^2 \log\left(\frac{1-x}{1-xy}\right). 
\end{align*}
This can also be obtained from a known formula for $\Li_{2,1}(x,y)$ (see \cite[(2.48)]{Zhao}) and some functional equations for $\Li_3(z)$, 
but the deduction is fairly complicated, and will be omitted.
\end{example}

%%%%%%%%%%%%%%%%%%%%%%%%%%%%%%%%%%%%%%%%%%%%%%%%%%%%%%%%%%%%%%%%%%%

{\bf Acknowledgements.}\ 
%The authors wish to express their sincere gratitude to the referee for valuable suggestions and comments. 
This work was supported by Japan Society for the Promotion of Science, Grant-in-Aid for Scientific 
Research (A) 21H04430 (M. Kaneko), and (C) 21K03168 (H. Tsumura).

\

%\noindent
%2010 {\it Mathematics Subject Classification}: {Primary 11B68, Secondary 11M32, 11M99}.\\
%{\it Keywords}: {Poly-Bernoulli number, multiple zeta value, multiple zeta function, polylogarithm}.


\begin{thebibliography}{999}
\bibitem{Alf} 
L. V. Ahlfors, Complex Analysis, An Introduction to the Theory of Analytic Functions of One Complex Variable, Third edition, International Series in Pure and Applied Mathematics, McGraw-Hill Book Co., New York, 1978.

%\bibitem{AIK2014}  T.~Arakawa, T.~Ibukiyama and M.~Kaneko, 
%\emph{Bernoulli Numbers and Zeta Functions}, Springer, Tokyo, 2014.

%\bibitem{AK1999}  T.~Arakawa and M.~Kaneko, 
%Multiple zeta values, poly-Bernoulli numbers, and related zeta functions, {\it Nagoya Math. J.} {\bf 153} (1999), 189--209.

\bibitem{AK2004}  T.~Arakawa and M.~Kaneko, 
On multiple $L$-values, {J. Math. Soc. Japan} {\bf 56} (2004), 967--991.

\bibitem{Chapo2022}  F. Chapoton,  
Multiple $T$-values with one parameter, Tsukuba J. Math.  {\bf 46} (2022), 153--163.

\bibitem{GKZ2006}
H. Gangl, M. Kaneko, D. Zagier, Double zeta values and modular forms, in `Automorphic forms and Zeta functions', 
Proceedings of the conference in memory of Tsuneo Arakawa, World Scientific, (2006), 71--106.

%\bibitem{KPT}  
%\textsc{M.~Kaneko, M.~Pallewatta, and H.~Tsumura}, On polycosecant numbers, J. Integer Sequences, {\bf 23} (2020), Article 20.6.4.

\bibitem{Kamano2023}
{K. Kamano}, Poly-Bernoulli numbers with one parameter and their generating functions, Comment. Math. Univ. St. Pauli {\bf 71} (2023), 37--50. 

\bibitem{Kaneko-Tsumura2018}
{M.~Kaneko and H.~Tsumura}, Multi-poly-Bernoulli numbers and related zeta functions, Nagoya Math. J. {\bf 232} (2018), 19--54.

\bibitem{KT2020-ASPM}
{M.~Kaneko and H.~Tsumura}, 
Zeta functions connecting multiple zeta values and poly-Bernoulli numbers, Adv. Stud. Pure Math. {\bf 84}, 2020, 181--204.

\bibitem{KT2020-Tsukuba}
{M.~Kaneko and H.~Tsumura}, On multiple zeta values of level two, Tsukuba J. Math. {\bf 44} (2020), 213--234.

\bibitem{KT2022}
{M.~Kaneko and H.~Tsumura}, Multiple $L$-values of level four, poly-Euler numbers, and related zeta functions, Tohoku Math. J. {\bf 76} (2024), 361--389. % arXiv:\,2208.05146.

\bibitem{OZ2008} 
Y. Ohno and W. Zudilin, Zeta stars, Commun. Number Theory Phys. {\bf 2} (2008), 325--347.

\bibitem{Pallewatta}
M.~Pallewatta, 
On polycosecant numbers and level two generalization of Arakawa-Kaneko zeta functions, Doctoral thesis, Kyushu University, 2020.

\bibitem{Sasaki2012}  Y.~Sasaki, 
On generalized poly-Bernoulli numbers and related $L$-functions,
{J. Number Theory} {\bf 132} (2012), 156--170.

\bibitem{Zagier} 
D. Zagier, The dilogarithm function, Frontiers in number theory, physics, and geometry. II, 3--65, Springer, Berlin, 2007.

\bibitem{Zhao}
J. Zhao,
Multiple Zeta Functions, Multiple Polylogarithms and Their Special Values, 
Ser. Number Theory and Its Appl. \textbf{12}, World Sci. Publ., 2016.

\end{thebibliography}
\end{document}